\newtheorem{thm}{Theorem}[section]
\newtheorem{lem}[thm]{Lemma}
\newtheorem{prop}[thm]{Proposition}
\newtheorem{cor}[thm]{Corollary}
\theoremstyle{definition}
\newtheorem{exmp}{Example}[section]
\theoremstyle{remark}
\theoremstyle{definition}
\newtheorem{defn}{Definition}[section]
\theoremstyle{plain}
\title{On the impossibility of certain $({n^2+n+k}_{n+1})$ configurations}
\author{
  Jackson Philbrook, Benjamin Peet\\
  Department of Mathematics\\
  St. Martin's University\\
  Lacey, WA 98503 \\
  \texttt{bpeet@stmartin.edu} \\
}
\begin{document}
\maketitle

\begin{abstract}
This paper investigates the impossibility of certain $({n^2+n+k}_{n+1})$ configurations. Firstly, for $k=2$, the result of \cite{gropp1992non} that $\frac{n^2+n}{2}$ is even and $n+1$ is a perfect square or $\frac{n^2+n}{2}$ is odd and $n-1$ is a perfect square is reproved using the incidence matrix $N$ and analysing the form of $N^TN$. Then, for all $k$, configurations where paralellism is a transitive property are considered. It is then analogously established that if $n\equiv0$ or $n\equiv k-1$ mod $k$ for $k$ even, then $\frac{n^2+n}{k}$ is even and $n+1$ is a perfect square or $\frac{n^2+n}{k}$ is odd and $n-(k-1)$ is a perfect square. Finally, the case $k=3$ is investigated in full generality.
\end{abstract}

\keywords{Configuration, incidence matrix}
\textbf{2010 MSC Classification: 05B20, 05B30}

\section*{Acknowledgement}

We would like to acknowledge the contribution of Hayden Jones and Michael Rivet to this paper.

\section{Introduction}

 It has been established that there are no order 6 and order 10 projective planes and Theorem 1 from \cite{bruck_ryser_1949} establishes that if $n$ is congruent to 1 or 2 modulo 4, then $n$ must be a sum of two squares. This establishes that certain projective planes cannot exist.

Projective planes are particular examples of symmetric $(n^2+n+1_{n+1})$ configurations of points and lines. We define a configuration as in\cite{grunbaum2009configurations} as consisting of $v$ "points" and $r$ "lines". Each of the points lies on exactly $b$ lines, and each of the lines lies on exactly $k$ points. Two lines lie on at most one shared point. The terms "point" and "line" could be substituted as long as their incidence relation (where an incidence involves exactly one point and exactly one line) remains true. 

Combinatorically, a configuration is given by a pair $(\mathcal{P},\mathcal{L})$ where $\mathcal{P}=\{p_{1}, \ldots , p_{v}\} $; $\mathcal{L}=\{l_{1}, \ldots ,l_{b}\}$; each $l_{i}\subset \mathcal{P}$; and for any pair $p_{i_{1}},p_{i_{2}}$ there is at most one line $l_{j}$ that contains both elements; each point is incident with the same number of lines as any other (denoted $b$); each line is incident with the same number of points as any other (denoted $r$); and there are at least three points. We make the assumption that the space is connected in the sense that the collection of points cannot be split without splitting a line.

We denote such a configuration as a $(v_b,r_k)$ configuration, or $(v_b)$ symmetric configurations when $v=r$ and consequently $b=k$.

This paper looks to investigate the impossibility of certain $({n^2+n+k}_{n+1})$ symmetric configurations that have deficiency $k$ - informally how far they are from being a projective plane.

We first take the case $k=2$ and restate Corollary 2.3 of \cite{gropp1992non} with our own proof. This proof relies upon the incidence matrix $N$ and the particular form of $N^{T}N$ where $(N^{T}N)_{ij}$ describes how many times a line $l_i$ intersects a line $l_j$.

We then continue by considering configurations where parallelism is a transitive property according to the following definition:

\newtheorem*{def:1}{Definition \ref{def:1}}
\begin{def:1}

    We say that a configuration has the transitivity of parallels property if $l_1$ is parallel to both $l_2$ and $l_3$, then necessarily $l_2$ is parallel to $l_3$.
\end{def:1}

We then establish the following result for the impossibility of such configurations:

\newtheorem*{thm:1}{Theorem \ref{thm:1}}
\begin{thm:1}
Suppose that $n\equiv0$ or $n\equiv k-1$ mod $k$ for $k$ even. If a $(n^{2}+n+k_{n+1)}$ configuration satisfies the TOPs property then:
    
1. $\frac{n^2+n}{k}$ is even and $n+1$ is a perfect square

or:

2. $\frac{n^2+n}{k}$ is odd and $n-(k-1)$ is a perfect square. 

\end{thm:1}

We will show that these configurations are a generalization of $k$-nets \cite {bogya2015classification}, which are themselves generalizations of affine planes.

Finally, we pick $k=3$ and investigate the possible forms of $N^{T}N$ to yield the result:

\newtheorem*{thm:2}{Theorem \ref{thm:2}}
\begin{thm:2}
    If the lines of a configuration with deficiency $k=3$ are partitioned into $m$ cycles of lines where consecutive lines are non-intersecting, then either the configuration is configuration 9.1, or $m$ is odd.
\end{thm:2}

We make repeated use of the following preliminary definitions:

We say two lines $l_1$ and $l_2$ are non intersecting if the sets of points that make up each line share no elements. That is, that there exists no point that lies on both $l_1$ and $l_2$. \cite{dembowski1997finite} We interchangably use the terms parallel and non-intersecting.

A balanced incomplete block design (BIBD) is a similar concept to a configuration, but includes the extra parameter $\lambda$ and is defined  by consisting of $v$ points and $r$ lines. Each of the points lies on exactly $b$ lines, and each of the lines lies on exactly $k$ points. Finally, any two lines lie on exactly $\lambda$ points.\cite{dembowski1997finite}

A symmetric configuration (or BIBD) is such that $v=b$ and as $vr=bk$ \cite{grunbaum2009configurations}, $r=k$. We denote such symmetric configurations as $(v_r)$ configurations.

When $\lambda=1$, we have any two lines lie on one exactly one point, that is, all lines intersect. These are the finite projective planes and is the intersection of BIBDs and configurations.

Necessarily, a finite projective plane is symmetric and must have $n^2+n+1$ points (and lines) and $n+1$ points per line (and lines per point) for some natural number $n$. \cite{dembowski1997finite} Hence, $(n^2+n+1_{n+1})$ configurations where $n$ is termed the order of the projective plane.

A symmetric configuration is cyclic if its' point set can be denoted as $\{0,\ldots,v-1\}$ and then its' lines are given by $\{p_1+j \text{ mod } v, \ldots, p_k +j\text{ mod } v\}$ for $j=0,\ldots,v-1$ and some distinct $p_1,\dots,p_k \in \{0,\ldots,v-1\}$.

Given, a $(n^2+n+k_{n+1})$ configuration, $k$ is termed the deficiency and is in some sense how far from a projective plane a symmetric configuration is. The parameter $k$ will only be used in this context from here forward (not as the number of lines per point).

For a configuration (or BIBD) the incidence matrix is the binary matrix $N$ of dimension $v\times b$ such that:

$$n_{ij}=\begin{cases} 
      0 & \text{point $p_i$ is not incident with $l_j$} \\
      1 & \text{point $p_i$ is incident with $l_j$} \\
   \end{cases}
$$

A circulant matrix $A$ with first row $c_0,c_1,\ldots, c_{t-1}$ is a square matrix of size $t \times t$, where each consecutive row is the previous row shifted one column to the right, that is, each element $a_{ij}=a_{(i-1)((j-1) \text{ modulo } t)}$ for $i=1,\ldots,t-1$. The matrix then looks as follows:

$$A= \begin{bmatrix}
c_0 & c_1 &\ldots & c_{t-1} \\
c_{t-1} & c_0 & \ldots & c_{t-2} & \\
\vdots & \vdots &&\vdots\\
c_1&c_2&\ldots&c_0\\

\end{bmatrix}$$

These circulant matrices will be a useful tool in our final Theorem

\section{Preliminary results}

We repeatedly use the proof of Theorem 1 of \cite{shrikhande1950impossibility} as a model for proving our results:

\begin{thm}
    A necessary condition for the existence of a symmetric balanced incomplete block design with parameters $v,r$ and $\lambda$ where $v$ is even is that $r-\lambda$ is a perfect square.
\end{thm}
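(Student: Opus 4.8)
The plan is to study the $v\times v$ incidence matrix $N$ of the symmetric design and to extract the arithmetic constraint by taking determinants of the Gram matrix $NN^{T}$. First I would establish the fundamental identity $NN^{T}=(r-\lambda)I+\lambda J$, where $I$ is the $v\times v$ identity and $J$ is the all-ones matrix. This follows by inspecting the entries directly: the $(i,i)$ entry of $NN^{T}$ counts the number of lines through the point $p_i$, which is $r$, while for $i\neq j$ the $(i,j)$ entry counts the number of lines containing both $p_i$ and $p_j$, which is $\lambda$ by the BIBD hypothesis. I would also record that $NJ=JN=rJ$, since every row and column of $N$ sums to $r$ in the symmetric case.

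Next, because the design is symmetric we have $v=b$, so $N$ is square and $\det(NN^{T})=(\det N)^{2}$. I would then compute $\det\bigl((r-\lambda)I+\lambda J\bigr)$ through its eigenvalues: the all-ones vector is an eigenvector with eigenvalue $(r-\lambda)+\lambda v$, and its orthogonal complement yields the eigenvalue $r-\lambda$ with multiplicity $v-1$. Invoking the standard counting identity $r(r-1)=\lambda(v-1)$ (which comes from $\lambda(v-1)=r(k-1)$ together with $k=r$), the leading eigenvalue simplifies to $r^{2}$. This gives the key determinant relation $(\det N)^{2}=r^{2}(r-\lambda)^{v-1}$.

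Finally comes the arithmetic step, which I expect to be the real crux. Since $N$ is a $0/1$ matrix, $\det N$ is an integer, so $(r-\lambda)^{v-1}=(\det N/r)^{2}$ is the square of a rational number and hence is a perfect square of an integer. When $v$ is even the exponent $v-1$ is odd, and a perfect square whose base carries an odd exponent forces that base, namely $r-\lambda$, to itself be a perfect square. The main obstacle is the careful parity bookkeeping here: one must argue that it is $r-\lambda$ itself, and not merely some even power of it, that is pinned down to be square, and verify that $\det N/r$ is genuinely rational with integer square. This is precisely where the hypothesis that $v$ is even is used, and it is the feature I will later imitate when analysing $N^{T}N$ for deficiency-$k$ configurations.
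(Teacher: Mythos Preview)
Your argument is correct. Note, however, that the paper does not itself supply a proof of this theorem: it merely quotes it from Shrikhande and uses the \emph{method} of that proof as a template in Theorems~3.1 and~5.3. So the comparison is really between your approach and the Shrikhande-style computation the paper exhibits in those later proofs.

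The difference is in how the determinant of $NN^{T}$ (or $N^{T}N$) is evaluated. You diagonalize $(r-\lambda)I+\lambda J$ via its eigenvalues, obtaining $r^{2}(r-\lambda)^{v-1}$ in one stroke. The paper's template instead performs explicit row and column operations---subtracting the first column from all others, then adding all rows to the first---to reduce the matrix to a form whose determinant can be read off by cofactor expansion. Both routes give the same answer, and your eigenvalue argument is arguably cleaner for the BIBD case where the Gram matrix is literally $(r-\lambda)I+\lambda J$.

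The reason the paper favours the row-operation approach is that it transfers more directly to the deficiency-$k$ situations treated later. There the matrix $N^{T}N$ is no longer of the simple form $\alpha I+\beta J$ but instead has a block structure (with $k\times k$ blocks along the diagonal in Theorem~5.3, or circulant blocks in Theorem~6.1), and the eigenvectors are not as immediately accessible. The elementary column-subtraction manoeuvre still works in those settings, which is why the paper singles it out as the model. If you intend, as you say, to ``later imitate'' this for deficiency-$k$ configurations, you will likely find yourself reverting to the row-operation method anyway.
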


We note again that in the case $\lambda=1$, we do reduce to a restricted form of a combinatorial configuration, a projective plane. However, given a projective plane, the number of points is necessarily odd. Hence this particular theorem does not work for our purposes. The proof however, will form the template for proving our results.

\subsection{Examples}

Theorem 1 from \cite{bruck_ryser_1949} (in adapted form) states that:

\begin{thm}
    If there exists a projective plane of finite order $n$ and if $n$ is congruent to $1$ or $2$ modulo $4$, then $n$ must be a sum of two squares.
\end{thm}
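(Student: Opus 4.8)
The plan is to convert the combinatorial hypothesis into a statement about rational quadratic forms and then extract the arithmetic obstruction with Lagrange's four-square theorem. Let $N$ be the incidence matrix of the plane and put $v=n^2+n+1$. Because any two distinct lines meet in exactly one point and each line carries $n+1$ points, the Gram matrix satisfies $N^TN=nI+J$, where $J$ is the all-ones matrix (the diagonal entries are the $n+1$ points on a line and the off-diagonal entries are the single common point of two lines). The determinant identity $(\det N)^2=\det(nI+J)=(n+1)^2n^{v-1}$ shows $N$ is nonsingular and, since $v-1=n(n+1)$ is even, yields no contradiction by itself. The operative point is that $N^TN=nI+J$ realizes a congruence of quadratic forms over $\mathbb{Q}$: writing $y=Nx$ gives the identity
\[ y_1^2+\cdots+y_v^2 = n\bigl(x_1^2+\cdots+x_v^2\bigr)+\bigl(x_1+\cdots+x_v\bigr)^2, \]
so the sum-of-squares form $I_v$ is equivalent over $\mathbb{Q}$ to $nI_v+J$.

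Next I would reduce the number of variables. By Lagrange's theorem write $n=a^2+b^2+c^2+d^2$; the associated four-square (Euler) identity supplies, for any block of four variables, an invertible rational substitution converting $n(x_i^2+x_{i+1}^2+x_{i+2}^2+x_{i+3}^2)$ into a plain sum of four squares $z_i^2+\cdots+z_{i+3}^2$. Here the hypothesis $n\equiv1$ or $n\equiv2\pmod4$ enters decisively: it forces $v=n^2+n+1\equiv3\pmod4$, so the $v$ variables split into $(v-3)/4$ full blocks of four together with three leftover variables. Applying the identity to each block and then performing a descent---at each step choosing $y_i=z_i$ or $y_i=-z_i$, whichever is a legitimate nonsingular rational specialization, so as to cancel $y_i^2$ against $z_i^2$---I would eliminate the block variables one at a time.

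The descent collapses the $v$-variable identity to a ternary relation of the shape $z^2+(-1)^{(v-1)/2}w^2=n\,u^2$ in rationals $z,u,w$ not all zero. The sign is the heart of the matter: since $n\equiv1,2\pmod4$ makes $(v-1)/2=n(n+1)/2$ odd, it is negative, so the relation reads $z^2+w^2=n\,u^2$; specializing $u=1$ then exhibits $n$ as a sum of two rational squares. The standard fact that a positive integer is a sum of two rational squares if and only if it is a sum of two integer squares upgrades this to $n=s^2+t^2$ with $s,t\in\mathbb{Z}$, which is the claim. For $n\equiv0,3\pmod4$ the same bookkeeping produces the opposite sign and the relation $z^2-w^2=n\,u^2$, which is solvable for every $n$, explaining why no restriction appears in those residue classes.

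The step I expect to be most delicate is this descent in tandem with the sign bookkeeping: one must verify that at every stage a nonsingular rational specialization is available, so that the surviving solution is genuinely nontrivial, and one must track the accumulated sign $(-1)^{(v-1)/2}$ correctly, since it alone separates the constrained classes $n\equiv1,2$ from the unconstrained classes $n\equiv0,3\pmod4$. As an independent check I would rerun the argument through the Hasse--Minkowski theorem: the rational equivalence $nI+J\cong I_v$ forces equality of the Hasse--Witt invariants at every place, and a Hilbert-symbol computation shows that the only surviving local condition is $(n,-1)_p=1$ for all $p$, which holds exactly when $n$ is a sum of two squares; the residue hypothesis $n\equiv1,2\pmod4$ is precisely what makes this condition non-vacuous.
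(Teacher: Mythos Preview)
The paper does not supply a proof of this statement at all: it is quoted verbatim as Theorem~1 of Bruck and Ryser and used only as a black box (to note, for instance, that there is no $(43_7)$ projective plane). There is therefore nothing in the paper to compare your argument against.

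For what it is worth, your sketch is the classical Bruck--Ryser line: $N^TN=nI+J$ gives a rational equivalence of quadratic forms, Lagrange plus the four-square identity strips off blocks of four variables, and the residual small form is solvable only when $n$ is a sum of two squares. Your own caveat is well placed: the passage from ``$(v-3)/4$ blocks plus three leftovers'' to a single ternary relation with sign $(-1)^{(v-1)/2}$ is not literally what the descent produces, and the standard presentation instead introduces an auxiliary variable $x_{v+1}$ so that $v+1\equiv 0\pmod 4$, after which the cancellation leaves $n\,x_{v+1}^2=z^2+w^2$ directly, with no sign bookkeeping needed. Your Hasse--Minkowski cross-check is a clean alternative and would stand on its own.
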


This proves that there is no combinatorial configuration $({n^2+n+1}_{n+1})$ for $n=6$ in particular.

We now consider some known examples with $n=2$ and note that there are indeed $({n^2+n+k}_{n+1})=({6+k}_{3})$ configurations for all $k$. The first few examples are the $(8_3)$ configuration of Mobius-Kantor \cite{coxeter1950self}; the $(9_3)$ configuration of Pappus \cite{grunbaum2009configurations}; and the $(10_3)$ configuration of Desargues \cite{pisanski2012configurations}.

Indeed, \cite{betten2000counting} enumerates the number of such symmetric configurations.

\section{$k=2$}

\subsection{Theorem 3.1}

We state our first result which is Corollary 2.3 of \cite{gropp1992non}. In that paper, the proof given uses Theorem 9 of \cite{bose1952combinatorial}. We give our own proof that will be generalized for greater deficiencies in section 5.

\begin{thm}

There exists a $(n^{2}+n+2_{n+1})$ configuration only if either:

1. $\frac{n^2+n}{2}$ is even and $n+1$ is a perfect square

or:

2. $\frac{n^2+n}{2}$ is odd and $n-1$ is a perfect square.

\end{thm}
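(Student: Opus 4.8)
The plan is to mimic the template of Theorem 2.1 (Shrikhande's argument) but applied to the incidence matrix $N$ of the configuration rather than a BIBD. Let me think through what $N^TN$ looks like for a $(n^2+n+2_{n+1})$ configuration and how the determinant/perfect-square machinery will play out.
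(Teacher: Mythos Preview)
Your plan is exactly the paper's approach: it forms $B=N^TN$, uses the fact that in deficiency $2$ every line has a \emph{unique} non-intersecting partner so that (after relabeling) $B$ consists of $2\times 2$ diagonal blocks $(n+1)I_2$ against a background of ones, and then row-reduces to obtain $\det B=(n+1)^{3+\frac{n^2+n}{2}}(n-1)^{\frac{n^2+n}{2}}$, from which the conclusion follows since $\det B=(\det N)^2$ must be a perfect square. The only substantive ingredient your outline leaves implicit is that ``exactly one parallel per line'' structural fact, which is what gives $B$ its clean block form and makes the Shrikhande-style computation go through.
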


\begin{proof}

We mimic the proof of Theorem 1 from \cite{shrikhande1950impossibility}. Note that $n^2+n+2$ is always even and that each line has exactly one line that it does not intersect.

Without loss of generality we pair off the non-intersecting lines as $(1,2),\ldots,(n^2+n+1,n^2+n+2)$.

Now, the matrix $B=N^TN$ where $N$ is the incidence matrix is such that $b_{ij}$ represents how many times line $l_i$ intersects line $l_j$. This will be either $0$ or $1$ if $i\neq j$ and will be $n+1$ if $i=j$.

In particular, it will have form:

$$\begin{bmatrix}
n+1 & 0 & 1 & 1 & \ldots &1 & 1\\
0 & n+1 & 1 & 1 & \ldots &1 & 1 \\
1 & 1 & n+1 & 0 &  & 1 & 1 \\
1 & 1 & 0 & n+1 &  & 1 & 1 \\
\vdots & \vdots & & & \ddots&   & \\
1 & 1 &1&1&& n+1 & 0 \\
1 & 1 &1&1&& 0 & n+1 \\

\end{bmatrix}$$

That is, an $n^2+n+2$ by $n^2+n+2$ matrix of ones with $2$ by $2$ matrices on the diagonal of the form: $\begin{bmatrix} n+1 & 0 \\ 0 & n+1 \end{bmatrix}$.

We compute the determinant of $B$. Mimicing \cite{shrikhande1950impossibility}, we subtract column 1 from all other columns and then add all rows to row 1. This yields the following matrix form:

$$\begin{bmatrix}
n+1+n^2+n & 0 & 0 & 0 & \ldots &0 & 0\\
0 & n+1 & 1 & 1 & \ldots &1 & 1 \\
1 & 0 & n & -1 &  & 0 & 0 \\
1 & 0 & -1 & n &  & 0 & 0 \\
\vdots & \vdots & & & \ddots&   & \\
1 & 0 &0&0&& n & -1 \\
1 & 0 &0&0&& -1 & n \\

\end{bmatrix}$$

Noting that $n^2+2n+1=(n+1)^2$ and computing the determinant by expanding along the first row yields:

$$(n+1)^2\begin{vmatrix}
n+1 & 1 & 1 & \ldots &1 & 1 \\
0 & n & -1 &  & 0 & 0 \\
0 & -1 & n &  & 0 & 0 \\
\vdots & & & \ddots&   & \\
0 &0&0&& n & -1 \\
0 &0&0&& -1 & n \\

\end{vmatrix}$$

Then expanding along the first column yields:

$$(n+1)^3\begin{vmatrix}

n & -1 &  & 0 &0  \\
-1 & n &  & 0 & 0\\
 & & \ddots &   & \\
0&0&& n & -1 \\
0&0&& -1 & n \\

\end{vmatrix}$$

This is then:

$$(n+1)^3\begin{vmatrix} n & -1 \\ -1 & n \end{vmatrix}^\frac{n^2+n}{2}=(n+1)^3(n^2-1)^\frac{n^2+n}{2}=(n+1)^{3+\frac{n^2+n}{2}}(n-1)^\frac{n^2+n}{2}$$

Given that $|B|=|NN^T|=|N|^2$ is a square number, we must have that either:

1. $\frac{n^2+n}{2}$ is even and $n+1$ is a perfect square

or:

2. $\frac{n^2+n}{2}$ is odd and $n-1$ is a perfect square.

\end{proof}

\subsection{Examples}
Using this formula, the first
five values of $n$ that do not satisfy these requirements are $n=4, 6, 7 ,9,11$. That is, there are no combinatorial configurations with parameters $(22_5),(44_7),(58_8),(92_{10})$ and $(134_{12})$.

\section{Configurations with the TOPs property}

\subsection{Definitions and preliminary results}
We now consider the first $15_4$ configuration from \cite{betten1999tactical}. Note that the deficiency $k=3$. Therefore, each line has two non-intersecting lines. In this example, given a line, the two non-intersecting lines themselves intersect.

In the third $15_4$ configuration from \cite{betten1999tactical} it is true that if a line is non-intersecting with two lines, then those two lines are also non-intersecting.

Given that the matrix $B=N^TN$ will be significantly easier to determine for the second case, we make our definition of the transitivity of parallels property and proceed to restrict our attention to them.

\begin{defn}
\label{def:1}
    We say that a configuration has the transitivity of parallels property if $l_1$ is parallel to both $l_2$ and $l_3$, then necessarily $l_2$ is parallel to $l_3$.
\end{defn}

Playfair's axiom \cite{playfair1849elements} in the context of Euclidean (non-finite) geometry states that "In a plane, given a line and a point not on it, at most one line parallel to the given line can be drawn through the point."

In some sources it is stated as "exactly one line parallel" as in Euclidean plane geometry this follows from the other axioms. However, we deliberately choose the original definition and call it Playfair's condition for configurations:

"In a configuration, given a line and a point not on it, there is at most one line parallel to the given line incident to the given point."

\begin{prop}
    The TOPs property is equivalent to Playfair's condition for configurations.
\end{prop}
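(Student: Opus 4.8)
The plan is to prove the equivalence by establishing both implications separately, each by contraposition. Throughout I take ``parallel'' to mean ``non-intersecting'' as fixed in the preliminaries, and I note at the outset that since any line shares all of its points with itself, a line is never parallel to itself; consequently the three lines $l_1, l_2, l_3$ appearing in the TOPs property are to be read as pairwise distinct, and Playfair's condition concerns a line through a point genuinely distinct from the given line.

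First I would show that the TOPs property implies Playfair's condition. Suppose Playfair's condition fails, so there exist a line $l$ and a point $p$ not on $l$ through which pass two distinct lines $m_1$ and $m_2$, each parallel to $l$. Since $p$ lies on both $m_1$ and $m_2$ but not on $l$, the line $l$ is distinct from each $m_i$, so the TOPs property applies with $l_1 = l$, $l_2 = m_1$, $l_3 = m_2$ and forces $m_1$ to be parallel to $m_2$. But $m_1$ and $m_2$ meet at $p$, a contradiction. Hence the TOPs property yields Playfair's condition.

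Conversely I would show Playfair's condition implies the TOPs property. Suppose $l_1$ is parallel to both $l_2$ and $l_3$, with the three lines distinct, but, contrary to the desired conclusion, $l_2$ and $l_3$ intersect; being distinct lines in a configuration they meet in exactly one point $p$. Because $l_1$ is parallel to $l_2$ and $p$ lies on $l_2$, the point $p$ cannot lie on $l_1$; thus $p$ is a point off $l_1$ through which two distinct lines $l_2$ and $l_3$, both parallel to $l_1$, pass, contradicting Playfair's condition. Therefore $l_2$ and $l_3$ are parallel, which is the TOPs property.

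The argument amounts to unwinding the two definitions and invoking a single point-of-intersection observation, so I expect no serious technical obstacle. The one place requiring genuine care is the bookkeeping of distinctness and the non-reflexivity of parallelism: in each direction I must check that the witnessing line actually avoids the relevant point, so that the ``point not on the line'' hypothesis of Playfair's condition legitimately applies, and that the two lines through that point are truly distinct. The latter is precisely where the configuration axiom that two distinct lines share at most one point is used.
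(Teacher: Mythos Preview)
Your proof is correct and follows essentially the same approach as the paper: both implications are argued by contradiction, producing two intersecting parallels-to-$l_1$ to violate Playfair's in one direction, and two Playfair-violating lines through the intersection point in the other. Your version is in fact slightly more careful than the paper's about verifying the distinctness of the lines and that the point $p$ genuinely lies off $l_1$.
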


\begin{proof}
    First we show that TOPs implies Playfair's. For contradiction let Playfair's be false, then given a line $l_1$, and a point $p$ not on $l_1$ there are at least two lines $l_2,l_3$ incident with $p$ both parallel to $l_1$. Hence $l_2$ and $l_3$ are not parallel, contradicting the TOPs property.

    Similarly, to show that Playfair's implies TOPs, let Playfair's hold true. For the sake of contradiction, suppose the TOPs property is false and that for $l_1$ parallel to both $l_2$ and $l_3$, $l_2$ and $l_3$ are not parallel. Hence they meet at a shared point $p$. There are now two lines on point $p$ that are parallel to $l_1$, which contradicts Playfair's. 
\end{proof}

\begin{prop}
    If a $({n^2+n+k}_{n+1})$ configuration satisfies the TOPs property (equivantly Playfair's condition), then:
    $$k\leq n+1$$
\end{prop}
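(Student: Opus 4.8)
The plan is to count parallels locally and then exploit transitivity to organize the lines into disjoint parallel classes whose sizes, together with a simple point-covering bound, force the inequality. First I would fix an arbitrary line $l$ and count how many lines it meets. Since each of the $n+1$ points on $l$ lies on exactly $n+1$ lines (the configuration is symmetric, so there are $n+1$ lines through each point), there are $n$ lines other than $l$ through each such point. Because two distinct lines of a configuration share at most one point, lines arising from different points of $l$ are genuinely distinct and no line is counted twice. Hence exactly $(n+1)n = n^2+n$ lines distinct from $l$ meet $l$. As there are $n^2+n+k$ lines in total, subtracting $l$ itself and the $n^2+n$ secants leaves exactly
$$
(n^2+n+k) - 1 - (n^2+n) = k-1
$$
lines parallel to $l$. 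Thus every line has precisely $k-1$ parallels.

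Next I would upgrade the TOPs property to a genuine equivalence relation. Define $l \sim l'$ to hold when $l = l'$ or $l$ is parallel to $l'$. This relation is reflexive and symmetric by construction, and transitive by Definition \ref{def:1}: if $l_2$ is parallel to both $l_1$ and $l_3$, then TOPs yields $l_1$ parallel to $l_3$, so the equalities and parallelisms compose correctly. The equivalence class of a line $l$ therefore consists of $l$ together with its $k-1$ parallels, and by TOPs these $k-1$ parallels are moreover mutually parallel; hence each class is a set of exactly $k$ pairwise non-intersecting lines.

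Finally I would turn the disjointness of a parallel class into a point count. The $k$ lines in a single class are pairwise non-intersecting, meaning their point sets are pairwise disjoint, and each line carries $n+1$ points. They therefore occupy $k(n+1)$ distinct points, all drawn from the $n^2+n+k$ points of the configuration, giving
$$
k(n+1) \leq n^2+n+k.
$$
Rearranging to $kn \leq n^2+n = n(n+1)$ and dividing by $n$ yields $k \leq n+1$, as claimed.

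I expect the main obstacle to be the opening double count establishing that each line has exactly $k-1$ parallels; the care there lies in justifying that the $(n+1)n$ secant lines are all distinct, which rests squarely on the defining property that two lines of a configuration share at most one point. Once that count is secure, the passage to equivalence classes and the covering inequality are routine.
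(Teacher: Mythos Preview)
Your argument is correct and follows essentially the same route as the paper: obtain a set of $k$ mutually non-intersecting lines and observe that they occupy $k(n+1)$ distinct points, forcing $k(n+1)\leq n^2+n+k$ and hence $k\leq n+1$. The only difference is one of explicitness: the paper's proof simply begins ``Given a set of $k$ mutually non-intersecting lines\ldots'' and proceeds directly to the point count, whereas you first carry out the double count showing every line has exactly $k-1$ parallels and then invoke TOPs to assemble the parallel class of size $k$; this extra work justifies what the paper takes for granted.
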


\begin{proof}
    Given a set of $k$ mutually non-intersecting lines, we have at a minimum $k(n+1)$ points. Hence $n^2+n+k\geq k(n+1)$ and consequently $n+1\geq k$. 
\end{proof}

\subsection{Examples and connections to affine planes and $k$-nets}
We first give an example of a configuration with the TOPs property:

\begin{exmp}
    The Pappus configuration, with $n=2$ and $k=3$.

    \begin{figure}[ht]
\centering
\includegraphics[height=6cm]{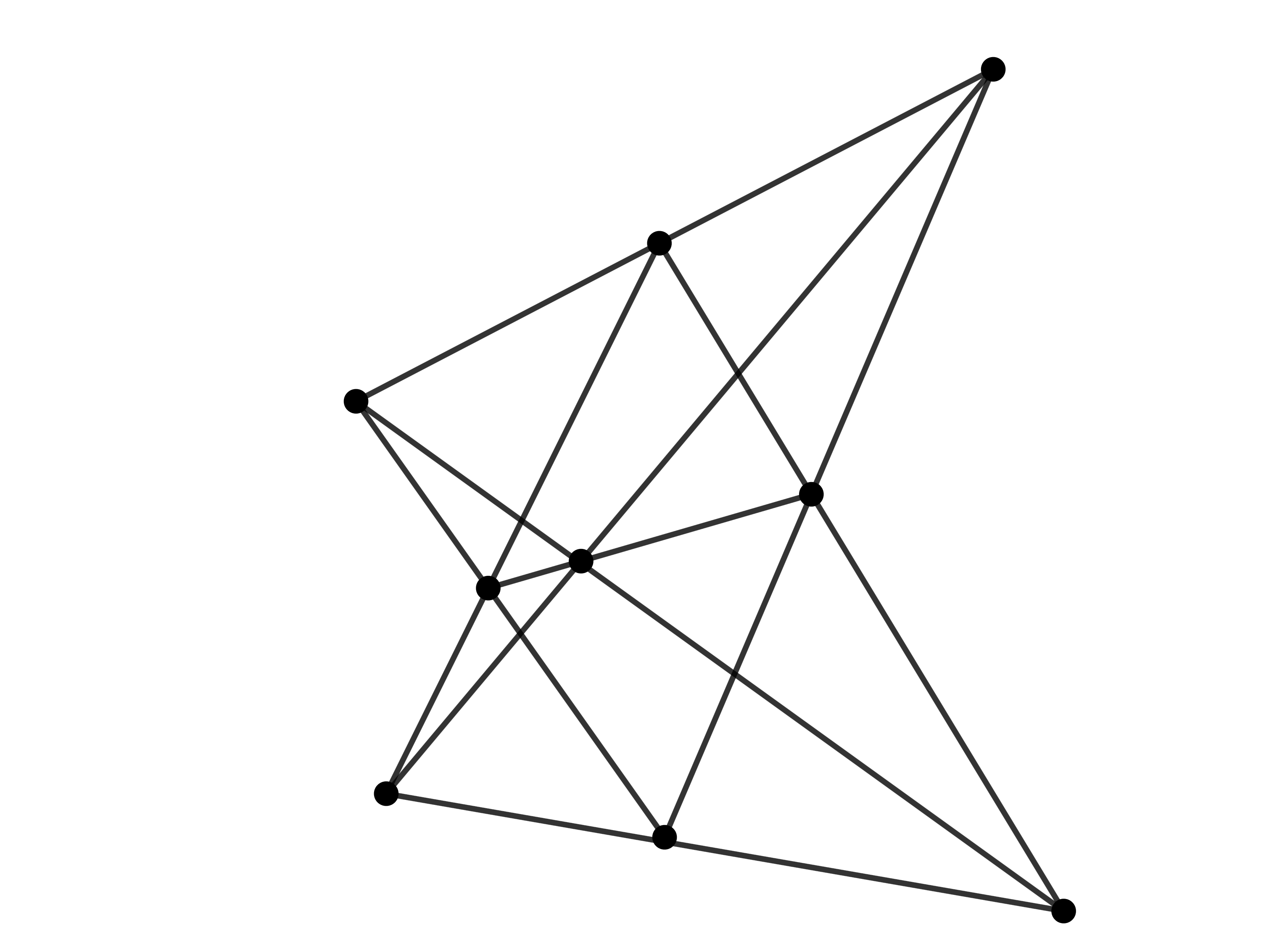}
\caption{Pappus' configuration}
\end{figure}

Note the three sets of mutually parallel lines.
\end{exmp}

We make here the observation that parallelism is also transitive for finite affine planes. The difference between these types of configurations and affine planes is that for an affine plane, any two points must both be incident with some line. This is not true for configurations with the TOPs property. For the Pappus configuration above, it can be seen that the two left most points do not lie on a line together. Hence, while the Pappus configuration has the TOPs property, it is not affine. Indeed, no symmetric configuration can be affine. \cite{grunbaum2009configurations}

A generalization of an affine plane is a $k$-net \cite{bogya2015classification}, in this case parallelism is again transitive, each point must lie on a line from each of the parallel classes and there are $k$ parallel classes. In this scenario, the Pappus configuration is a $3$-net of order $3$. However, for $k$-nets, any set of parallel lines partitions the points. This is not true for configurations with the TOPs property in general. We offer up the example of the third $15_4$ configuration from \cite{betten1999tactical}. Given each line has $4$ points, it is not possible that a set of mutually parallel lines could partition the $15$ points.

We formalize this as the following:

\begin{prop}
A $(n^2+n+k_{n+1})$ configuration with the TOPs property is a $k$-net if and only if $k=n+1$.
\end{prop}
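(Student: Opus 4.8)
The plan is to reduce the geometric condition defining a $k$-net---that every parallel class partitions the point set---to a single arithmetic identity in $n$ and $k$, and to show this identity holds precisely when $k=n+1$.

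First I would pin down the combinatorial data forced by the parameters. Counting incidences through a fixed line $l$: each of its $n+1$ points lies on $n$ further lines, and these $(n+1)n = n^2+n$ lines are distinct (two lines meet in at most one point, so none can pass through two points of $l$). Hence $l$ meets exactly $n^2+n$ lines and is parallel to exactly $(n^2+n+k)-1-(n^2+n) = k-1$ of the remaining lines. Invoking the TOPs property (transitivity of parallelism), the relation ``parallel or equal'' is an equivalence relation, so the lines split into parallel classes, and the above count shows each class consists of exactly $k$ mutually non-intersecting lines.

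Next I would compute the number of points covered by a single parallel class. Its $k$ lines are pairwise disjoint and each carries $n+1$ points, so together they cover exactly $k(n+1)$ points. By definition, the configuration is a $k$-net precisely when each parallel class partitions the points, i.e. when these $k(n+1)$ points exhaust all $n^2+n+k$ points of the configuration. Setting $k(n+1) = n^2+n+k$ and simplifying gives $kn = n^2+n = n(n+1)$, hence $k=n+1$; this settles the ``only if'' direction.

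For the converse I would substitute $k=n+1$ and verify the defining axioms of a $k$-net directly. The total number of points is then $n^2+n+(n+1) = (n+1)^2$, which equals $k(n+1)$, so every parallel class does partition the points. There are $(n^2+n+k)/k = (n+1)^2/(n+1) = n+1 = k$ parallel classes. Finally, since two lines through a common point must intersect, no point lies on two lines of the same class; as each point lies on $n+1$ lines and there are exactly $n+1$ classes, each point lies on exactly one line from each class. These are precisely the $k$-net axioms. The only real subtlety, and thus the step to state most carefully, is the first one---establishing that each parallel class has exactly $k$ lines---since it is here that both the incidence count and the TOPs hypothesis are genuinely used; the remainder reduces to the single identity $k(n+1) = n^2+n+k$.
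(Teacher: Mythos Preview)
Your argument is correct and follows essentially the same approach as the paper: in both directions the key observation is that a parallel class consists of $k$ pairwise disjoint lines covering $k(n+1)$ points, and one compares this with the total $n^2+n+k$. The only minor difference is in the ``only if'' direction: you solve the equation $k(n+1)=n^2+n+k$ directly to get $k=n+1$, whereas the paper extracts only the divisibility $n+1\mid k$ from the partition and then invokes the earlier bound $k\le n+1$ (Proposition~4.2) to conclude; your version is slightly more self-contained.
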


\begin{proof}
    If $k=n+1$ then each line does not intersect with $n$ other lines. Given the TOPs property, these lines are each mutually non-intersecting. Hence there are at minimum $n+1+n(n+1)=n^2+n+(n+1)$ points. This is all the points and so the set of mutually parallel lines partition the points. 

    For the converse, if a set of mutually parallel lines partition the set of $n^2+n+k$ points, then $n^2+n+k$ is divisible by $n+1$. Hence $n+1$ divides $k$. However, by Proposition 4.2, $k\leq n+1$. Hence $k=n+1$.
\end{proof}

\subsection{Properties of configurations with the TOPs property}

We here address how having the TOPs property relates to various basic concepts of configurations.

Firstly, a TOPs configuration may or may not be cyclic, just as a cyclic configuration may or may not have the TOPs property. As examples; the Pappus configuration has the TOPs property but is not cyclic; the first $(15_4)$ configuration of \cite{betten1999tactical} is cyclic but does not have the TOPs property; and the third $(15_4)$ configuration is cyclic and has the TOPs property.

Secondly, we show that if a configuration has the TOPs property, then so does its' dual:

\begin{prop}
    Given an $(n^2+n+k_{n+1})$ configuration with the TOPs property, the dual $(n^2+n+k_{n+1})$ configuration also has the TOPs property.
\end{prop}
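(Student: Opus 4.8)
The plan is to unwind what the TOPs property asserts for the dual and then prove it directly. Recall that in the dual the roles of points and lines are interchanged, so two ``lines'' of the dual are two points $p,q$ of the original configuration, and they are incident with a common ``point'' of the dual exactly when some original line passes through both $p$ and $q$. Consequently two dual-lines are parallel precisely when the two corresponding original points are non-collinear (joined by no line). Thus the TOPs property for the dual is the statement: if a point $p_1$ is non-collinear with $p_2$ and non-collinear with $p_3$, then $p_2$ and $p_3$ are non-collinear. My goal is to deduce this from the (line-)TOPs property of the original configuration.

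First I would record the elementary incidence facts needed: each point lies on exactly $n+1$ lines, each line carries exactly $n+1$ points, and two distinct points lie on at most one common line (equivalently, two lines meet in at most one point). I would also note the immediate consequence that the $n+1$ lines through a fixed point $p_1$ are pairwise intersecting (they all share $p_1$), so no two of them are parallel.

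The main step is a contradiction argument. Suppose the dual TOPs property fails, so there exist points with $p_1$ non-collinear to both $p_2$ and $p_3$, yet $p_2$ and $p_3$ lie on a common line $l$. Since $p_1$ is non-collinear with both $p_2$ and $p_3$, no line through $p_1$ can contain $p_2$ or $p_3$; in particular $p_1\notin l$. I then count the lines through $p_1$ that meet $l$: each such line meets $l$ in a single point, and that point must be one of the $n-1$ points of $l$ other than $p_2,p_3$, with at most one line through $p_1$ per such point. Hence at most $n-1$ of the $n+1$ lines through $p_1$ meet $l$, so at least two of them, say $m_1$ and $m_2$, are parallel to $l$. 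But $m_1$ and $m_2$ both pass through $p_1$, so they intersect, contradicting the original TOPs property applied to $l,m_1,m_2$.

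The step I expect to require the most care is the translation itself: pinning down that dual-parallelism is exactly original non-collinearity, and organizing the count so that the two ``extra'' lines through $p_1$ are genuinely forced to be parallel to $l$. Once the dictionary between the configuration and its dual is fixed, the counting is short, and the only arithmetic input is that $(n+1)-(n-1)=2>0$, which holds for every admissible $n$. An alternative route would phrase everything through the parallel-class decomposition guaranteed by TOPs and the resulting block form of $N^{T}N$, but the direct incidence count above seems cleaner and avoids invoking that structure.
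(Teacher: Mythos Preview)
Your proof is correct and follows essentially the same approach as the paper: assume $p_2,p_3$ are both non-collinear with $p_1$ yet lie on a common line $l$, then count that at most $n-1$ of the $n+1$ lines through $p_1$ can meet $l$, forcing two lines through $p_1$ to be parallel to $l$ and contradicting TOPs. The paper prefaces this with an explicit identification of the $k-1$ points non-collinear with a given $p_1$ (locating one on each line of the parallel class of a chosen line through $p_1$), but that setup is not needed for the argument and your streamlined version reaches the same contradiction directly.
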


\begin{proof}
    First take a point $p_1$ and a line $l_1$ through $p_1$.
    
    There then exist $l_2,\ldots,l_k$ so that $l_1,l_2,\ldots,l_k$ are mutually parallel.  Necessarily, there are $n$ other lines through $l_1$, call these $l_1^1,l_1^2,\ldots,l_1^n$. Each of $l_2,\ldots,l_k$ have $n+1$ points and must necessarily have a point that does not meet any of $l_1^1,\ldots,l_1^n$. Call these points $p_2,\ldots,p_k$. Note that these $k-1$ points are the only points that $p_1$ does not share a line with as the sum of points of lines through $p_1$ is $n+1+n\cdot n=n^2+n+k-(k-1)$.

    Now, suppose for contradiction that given two points from the collection $p_2,\ldots,p_k$ there is a line incident with both. Without loss of generality, take $p_2$ and $p_3$ and call the line $l$.

    So now $l$ has $n+1$ points and already intersects $l_2$ and $l_3$, so can intersect at most $n-1$ of $l_1,l_1^1,\ldots,l_1^n$. Hence there are at least two that it does not intersect. Without loss of generality, we let these be $l_1$ and $l_1^1$. Hence $l$ is parallel to both $l_1$ and $l_1^1$, but $l_1$ and $l_1^1$ are not parallel - intersect at $p_1$. This contradicts the configuration having the TOPs property.

    Hence given $p_1$ and two points that it does not share a line with, those two points also do not share a line.

    Thus, for the dual configuration, given a line and two lines that it does not share a point with, those two lines also do not share a point. That is the dual configuration also has the TOPs property.
\end{proof}

\section{$k$ general with the TOPs property}
\subsection{Main results}
We begin with Lemma 5.1 and then an immediate corollary in the case where $k$ is even.
\begin{lem}
If a $(n^{2}+n+k_{n+1})$ configuration satisfies the TOPs property then $n^{2}+n$ is a multiple of $k$.
\end{lem}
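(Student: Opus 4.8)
The plan is to show that the TOPs property forces the lines to split into parallel classes of a single fixed size, after which a counting argument yields the divisibility immediately. First I would establish a local count. Fix any line $l$. It contains $n+1$ points, and through each such point pass $n+1$ lines, one of which is $l$ itself. Since two distinct lines meet in at most one point, the lines through distinct points of $l$ (other than $l$) are pairwise distinct, so exactly $(n+1)n = n^2+n$ lines intersect $l$. As there are $n^2+n+k-1$ lines other than $l$ in total, precisely $(n^2+n+k-1)-(n^2+n)=k-1$ of them are parallel to $l$. Thus every line has exactly $k-1$ parallels.

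Next I would use the TOPs property to promote parallelism to an equivalence relation on the set of lines, with the convention that each line is parallel to itself. Reflexivity and symmetry are immediate, and transitivity is exactly the TOPs hypothesis, so the lines partition into parallel classes. Fixing a line $l$ with class $C$, the local count shows $l$ is parallel to exactly $k-1$ other lines; by transitivity these $k-1$ lines together with $l$ are mutually parallel, and they comprise all of $C$ since $C$ consists of $l$ together with every line parallel to it. Hence $|C| = 1 + (k-1) = k$, and as this reasoning applies to every class, all parallel classes have size exactly $k$.

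Finally, the parallel classes partition the $n^2+n+k$ lines into blocks of size $k$, so $k \mid n^2+n+k$, and therefore $k \mid n^2+n$, which is the claim.

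The step requiring the most care is the local counting argument: one must justify that a line through one point of $l$ cannot coincide with a line through a different point of $l$, which is precisely where the configuration axiom that two lines share at most one point is used. One must also confirm that the class of $l$ contains no lines beyond $l$ and its $k-1$ parallels, so that every class has size exactly $k$ rather than merely at least $k$; this closure again follows from transitivity combined with the exact count of $k-1$ parallels per line.
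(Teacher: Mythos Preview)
Your proof is correct and follows the same approach as the paper: partition the lines into parallel classes of size exactly $k$ and conclude by counting. The paper's argument is a one-liner that simply asserts the lines divide into groups of $k$ mutually parallel lines, whereas you supply the justification the paper omits --- the count showing each line has exactly $k-1$ parallels and the verification that TOPs upgrades parallelism to an equivalence relation.
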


\begin{proof}
    Dividing the lines into groups of $k$ mutually parallel lines yields $\frac{n^2+n+k}{k}=\frac{n^2+n}{k}+1$ groups. Hence, $\frac{n^2+n}{k}$ is a whole number and so $n^{2}+n$ is a multiple of $k$.
\end{proof}

\begin{cor}
    If a $(n^{2}+n+k_{n+1})$ configuration satisfies the TOPs property and $k$ is even then $n\equiv0$ or $n\equiv k-1$ mod $k$.
\end{cor}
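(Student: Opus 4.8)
By Lemma 5.1 we already know that $k \mid n^2+n = n(n+1)$, so the plan is simply to extract the divisibility constraint on $n$ modulo $k$ from the relation $k \mid n(n+1)$ under the extra hypothesis that $k$ is even. First I would write $k = 2m$ and note the elementary fact that $n$ and $n+1$ are consecutive integers, hence coprime; exactly one of them is even. The factor of $2$ in $k$ must therefore be absorbed entirely by whichever of $n$, $n+1$ is even, and the analysis splits according to the parity of $n$.

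The key steps, in order, are as follows. I would argue that $\gcd(n,n+1)=1$, so any prime power dividing $k$ divides either $n$ or $n+1$ but cannot be split across both. Applying this to the full factor $k$ is too strong in general (the two factors could share the burden among different primes), so the cleaner route is to reduce modulo $k$ directly. I would observe that $n(n+1)\equiv 0 \pmod{k}$ is the statement from Lemma 5.1, and then show that when $k$ is even, the only residues $n \bmod k$ compatible with both $n \equiv 0$ and $n \equiv k-1$ being the \emph{claimed} solutions are exactly those two. Concretely, I expect the honest argument to proceed by checking: if $n$ is even, then $n+1$ is odd, so the factor $2$ in $k=2m$ divides $n$; combined with the remaining odd part this should force $n \equiv 0 \pmod k$ after tracking the odd prime factors. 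Symmetrically, if $n$ is odd then $n+1$ is even, the factor $2$ divides $n+1$, and one is pushed toward $n+1 \equiv 0 \pmod k$, i.e. $n \equiv k-1 \pmod k$.

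The main obstacle is that the clean dichotomy ``$n\equiv 0$ or $n\equiv k-1$'' does \emph{not} follow from $k \mid n(n+1)$ for general even $k$: for instance with $k=12$ one has $n=3$ giving $n(n+1)=12$, yet $3 \not\equiv 0$ and $3 \not\equiv 11 \pmod{12}$. So the corollary as stated must be relying on more than Lemma 5.1 alone, or on an implicit restriction (perhaps that $k$ is such that its odd part causes no splitting, e.g. $k$ a power of two, or an unstated coprimality). I would therefore scrutinize whether the intended hypothesis forces the factor-$2$ argument to carry all of $k$. The hard part will be pinning down precisely which additional structure makes the two residues exhaustive; the most likely intended reading is that for the even values of $k$ under consideration the parity argument genuinely does isolate $n \equiv 0$ (when $n$ even) or $n\equiv k-1$ (when $n$ odd), and I would aim to state that parity split cleanly rather than attempt the false general claim.

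Once the parity split is established, the conclusion is immediate: the two cases $n$ even and $n$ odd map respectively to $n \equiv 0 \pmod k$ and $n \equiv k-1 \pmod k$, completing the corollary. I would present the proof as a short case analysis on the parity of $n$, invoking Lemma 5.1 for the divisibility $k \mid n(n+1)$ and coprimality of consecutive integers to assign the even factor, and I would be careful to flag any hypothesis on $k$ beyond ``$k$ even'' that the argument actually requires.
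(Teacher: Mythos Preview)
Your plan is exactly the paper's approach: invoke Lemma~5.1 to obtain $k\mid n(n+1)$, then split on the parity of $n$ and claim that the factor $k$ must land entirely in $n$ (if $n$ is even) or entirely in $n+1$ (if $n$ is odd). The paper's proof asserts precisely that, in two lines, without further justification.

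You are right to flag the gap. The step ``$n$ odd and $k$ even and $k\mid n(n+1)$ imply $k\mid n+1$'' is false for general even $k$: only the $2$-part of $k$ is forced into the even factor, while odd prime divisors of $k$ may sit in either $n$ or $n+1$. Your example $k=12$, $n=3$ already shows this at the level of arithmetic, and one can choose examples consistent with the ambient bound $k\le n+1$ from Proposition~4.2 as well, e.g.\ $k=6$, $n=8$ (here $6\mid 72$ but $8\equiv 2\pmod 6$) or $k=6$, $n=9$ (here $6\mid 90$ but $9\equiv 3\pmod 6$). So the dichotomy $n\equiv 0$ or $n\equiv k-1\pmod k$ does not follow from Lemma~5.1 plus parity alone, and the paper's proof shares exactly the weakness you identified. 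Your instinct to restrict to $k$ a power of two, or to flag an extra hypothesis, is the honest move; the paper simply does not address it.
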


\begin{proof}
    By Lemma 5.1, $n^2+n=mk$ for some natural number $m$. So $n(n+1)=mk$.
If $n$ is odd then $k$ must divide $n+1$, if $n$ is even then $n+1$ is odd and $k$ must divide $n$.
    Hence $n\equiv0$ or $n+1\equiv 0$ mod $k$. That is, $n\equiv0$ or $n\equiv k-1$ mod $k$

\end{proof}

We now can give our main result:

\begin{thm}
\label{thm:1}
    Suppose that $n\equiv0$ or $n\equiv k-1$ mod $k$ for $k$ even. If a $(n^{2}+n+k_{n+1})$ configuration satisfies the TOPs property then:

1. $\frac{n^2+n}{k}$ is even and $n+1$ is a perfect square

or:

2. $\frac{n^2+n}{k}$ is odd and $n-(k-1)$ is a perfect square.
    
\end{thm}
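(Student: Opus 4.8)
The plan is to follow the template of Theorem 3.1, replacing the parallel pairs used there by parallel classes of size $k$. Under the TOPs property the relation ``parallel or equal'' is an equivalence relation on the set of lines. Since the configuration is symmetric, each of the $n+1$ points of a given line lies on $n$ further lines, and no two of these further lines can coincide (such a line would meet the given line twice), so every line meets exactly $n^2+n$ others and is therefore parallel to exactly $(n^2+n+k-1)-(n^2+n)=k-1$ lines. Hence every parallel class has exactly $k$ lines, and by Lemma 5.1 there are $m+1=\frac{n^2+n+k}{k}$ of them, where $m=\frac{n^2+n}{k}$ is an integer. Ordering the lines class by class, the matrix $B=N^{T}N$ then has $n+1$ on the diagonal, $0$ between two distinct lines of the same class (non-intersecting), and $1$ between lines of different classes (meeting exactly once). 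Thus $B$ is the block matrix with diagonal blocks $(n+1)I_k$ and off-diagonal blocks equal to the all-ones matrix $J_k$; equivalently $B=(n+1)I+J-D$, where $J$ is the full all-ones matrix and $D$ is block-diagonal with blocks $J_k$.

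Next I would compute $\det B$. I would diagonalize $B$ directly, using $DJ=JD=kJ$, $D^2=kD$ and $J^2=(m+1)kJ$ to exhibit three common eigenspaces: the all-ones vector gives the eigenvalue $(n+1)+mk=(n+1)^2$ (invoking $mk=n^2+n$) with multiplicity $1$; vectors that are constant on each block and orthogonal to the all-ones vector give the eigenvalue $n+1-k=n-(k-1)$ with multiplicity $m$; and vectors supported on a single block with zero block-sum give the eigenvalue $n+1$ with multiplicity $(m+1)(k-1)$. These multiplicities sum to $(m+1)k=n^2+n+k$, as required, and multiplying the eigenvalues yields
$$\det B=\bigl(n-(k-1)\bigr)^{m}\,(n+1)^{(m+1)(k-1)+2}.$$
(One could instead mimic the explicit row and column reductions of Theorem 3.1; the eigenvalue computation is the cleaner route.) Setting $k=2$ recovers the formula $(n-1)^{m}(n+1)^{m+3}$ of the earlier proof.

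To finish, since the configuration is symmetric $N$ is square, so $\det B=(\det N)^2$ is a perfect square. The hypothesis that $k$ is even makes $k-1$ odd, whence the exponent $(m+1)(k-1)+2\equiv m+1\pmod 2$, which has parity opposite to the exponent $m$ of $n-(k-1)$. If $m$ is even, then $(n-(k-1))^{m}$ is already a perfect square, so the pure power $(n+1)^{(m+1)(k-1)+2}$, carried to an odd exponent, must be a perfect square, forcing $n+1$ to be a perfect square. If $m$ is odd, then $n+1$ occurs to an even power and it is $(n-(k-1))^{m}$ that must be a perfect square, forcing $n-(k-1)$ to be a perfect square. This is precisely the claimed dichotomy, with the quotient argument being legitimate because dividing one integer perfect square by another leaves an integer perfect square.

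I expect the determinant evaluation to be the main obstacle: correctly reading off the block structure of $B$ from the TOPs property and tracking all three eigenspaces (equivalently, carrying the generalized elimination through) so as to pin down \emph{both} exponents, in particular the collapse $(n+1)+mk=(n+1)^2$ on the all-ones eigenvector. Once the factorization is established the parity argument is immediate. I would also flag the degenerate case $n=k-1$ (i.e.\ $k=n+1$, the $k$-net case of Proposition 4.4), where $n-(k-1)=0$ is trivially a perfect square and $\det N=0$, so that the statement continues to hold.
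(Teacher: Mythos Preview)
Your proof is correct and follows the same overall architecture as the paper: order the lines by parallel classes so that $B=N^{T}N$ becomes the block matrix with diagonal blocks $(n+1)I_k$ and off-diagonal blocks $J_k$, evaluate $\det B$, and use $\det B=(\det N)^2$ together with the parity of the exponents (for $k$ even) to force the dichotomy. The one substantive difference is in how the determinant is computed: the paper carries out an explicit elimination (subtract the first column, add all rows to the first row, then reduce the residual block $A-J_k$ the same way) to reach $\det B=(n+1)^{\,k+1+(k-1)\frac{n^2+n}{k}}(n-(k-1))^{\frac{n^2+n}{k}}$, whereas you diagonalize $B=(n+1)I+J-D$ via its three natural eigenspaces to obtain the equivalent expression $(n+1)^{(m+1)(k-1)+2}(n-(k-1))^{m}$ with $m=\frac{n^2+n}{k}$. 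Your spectral computation is cleaner and makes the structure of the answer more transparent (the $(n+1)^2$ factor visibly comes from the all-ones eigenvector via $(n+1)+mk=(n+1)^2$); the paper's elimination is more hands-on but requires no linear-algebraic background beyond cofactor expansion. Your added remarks on the $n=k-1$ degenerate case and on why the quotient of integer squares is again a square are more careful than the paper, which simply asserts the final dichotomy.
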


\begin{proof}
    We once again mimic the proof of Theorem 1 from \cite{shrikhande1950impossibility}.

Note that $n^2+n+k$ is always even for $k$ even. Given the result has been proven for $k=2$ in Theorem 3.1, we can assume $k\geq4$.

Note that each line has exactly $k-1$ lines that it does not intersect and due to the TOPs property we can without loss of generality relabel the lines so the $k$-tuples of mutually non-intersecting lines are $(1,\ldots,k),\ldots,(n^2+n+1,\dots,n^2+n+k)$.

Hence the matrix $B=N^TN$ will have the form:

$$B=\begin{bmatrix}
A & 1_{{k\times k}} &\ldots & 1_{{k\times k}} &1_{{k\times k}}\\
1_{{k\times k}} & A & & 1_{{k\times k}} & 1_{{k\times k}}\\
\vdots &&\ddots&&\vdots\\
1_{{k\times k}}&1_{{k\times k}}&&A&1_{{k\times k}}\\
1_{{k\times k}}&1_{{k\times k}}&\ldots &1_{{k\times k}}&A\\

\end{bmatrix}$$

Where $1_{{k\times k}}$ is a $k$ by $k$ matrix of ones and $A=(n+1)I_{{k\times k}}$.

We compute the determinant of $B$ by again subtracting column 1 from all other columns and then add all rows to row 1. This yields the following matrix form:

$$\begin{bmatrix}
n^2+2n+1 & 0_{1\times k-1} & 0_{1 \times k} &\ldots & 0_{1\times k} &0_{1\times k}\\
0_{k\times 1} & (n+1)I_{k-1\times k-1} & 1_{k-1\times k} & \ldots & 1_{k-1\times k} &1_{k-1\times k}\\
1_{k\times 1} &0_{k\times k-1} & A-1_k & & 0_k & 0_k\\
\vdots &\vdots&&\ddots&&\vdots\\
1_{k\times 1}&0_{k\times k-1}&0_k&&A-1_k&0_k\\
1_{k\times 1}&0_{k\times k-1}&0_k&\ldots &0_k&A-1_k\\

\end{bmatrix}$$

Computing the determinant by expanding along the first row yields:

$$(n+1)^2\begin{vmatrix}
 (n+1)I_{k-1\times k-1} & 1_{k-1\times k} & \ldots & 1_{k-1\times k} &1_{k-1\times k}\\
0_{k\times k-1} & A-1_k & & 0_k & 0_k\\
\vdots&&\ddots&&\vdots\\
0_{k\times k-1}&0_k&&A-1_k&0_k\\
0_{k\times k-1}&0_k&\ldots &0_k&A-1_k\\

\end{vmatrix}$$

Then as an upper triangular matrix, the determinant is 

$$(n+1)^2(n+1)^{k-1}|A-1_k|^\frac{n^2+n}{k}$$

By a similar argument of subtracting column 1 and adding all rows to row 1, we yield that:

$$|A-1_k|=\begin{vmatrix}
n-(k-1) & 0 & 0 & \ldots &0 & 0\\
-1  & n+1 & 0 &  & 0 & 0 \\
-1  & 0 & n+1 &  & 0 & 0 \\
\vdots  & & & \ddots&   & \\
-1  &0&0&& n+1 & 0 \\
-1  &0&0&& 0 & n+1 \\ \end{vmatrix}=(n-(k-1))(n+1)^{k-1}$$

So finally,

\begin{equation} \label{eq:1}
|B|=(n+1)^{k+1+(k-1)\frac{n^2+n}{k}}(n-(k-1))^\frac{n^2+n}{k}
\end{equation}
Once again, $|B|$ must be square, so we have that either:

1. $\frac{n^2+n}{k}$ is even and $n+1$ is a perfect square

or:

2. $\frac{n^2+n}{k}$ is odd and $n-(k-1)$ is a perfect square.

\end{proof}

We make the note here that our results say nothing about $k$ odd. As one can see from the formula for the determinant of $B$, it must always be square as $\frac{n^2+n}{k}$ is always even for any $n$.

\subsection{Examples}

When $k=4$ Theorem 5.3 shows that there exist no configurations for the values of $n=7,11,16$. Similarly, when $k=6$, there exist no configurations for $n=11, 12, 17$.

That is, there are no configurations with the TOPs property with the following parameters: $(60_8),(136_{12}),(276_{17}),(138_{12}),(162_{13}),(312_{18})$.

\section{$k=3$ in general}

We note now that for $k=3$, any line in the configuration is parallel with two others. In the general case these are not necessarily parallel to each other. However, we can create cycles of lines each parallel to the next in the cycle. This then partitions the lines into cycles where consecutive lines are parallel. 

For example, the configuration denoted $9.1$ of \cite{betten2000counting}, has lines $\{\{0,1,2\},\{3,4,5\},\{6,7,8\},\{0,3,6\},\{0,4,7\},\{1,3,7\},\\\{1,5,8\},\{2,4,8\},\{2,5,6\}\}$. The lines are partitioned into three mutually non-intersecting lines $\{0,1,2\},\{3,4,5\},\{6,7,8\}$ and a cycle of 6 lines $\{0,3,6\},\{0,4,7\},\{1,3,7\},\{1,5,8\},\{2,4,8\},\{2,5,6\}$. We will refer to this configuration as configuration $9.1$.

So now we examine the matrix form $B=N^TN$ and yield the result:

\begin{thm}
\label{thm:2}
    If the lines of a configuration with deficiency $k=3$ are partitioned into $m$ cycles of lines where consecutive lines are non-intersecting, then either the configuration is configuration $9.1$ or $m$ is odd.
\end{thm}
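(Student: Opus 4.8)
The plan is to mimic the determinant computations of Theorems 3.1 and 5.3, except that for $k=3$ the parallelism relation is no longer a block-diagonal pattern but a $2$-regular graph. First I would record the structural facts. Since each line meets all but $k-1=2$ of the others, the \emph{non-intersection graph} on the $v=n^2+n+3$ lines is $2$-regular, hence a disjoint union of exactly the $m$ cycles of the hypothesis; let $A$ be its adjacency matrix and $C_{c_1},\dots,C_{c_m}$ the cyclic blocks, with $c_1+\dots+c_m=v$. Writing $J$ for the all-ones matrix and $\mathbf{1}$ for the all-ones vector, the entries of $B=N^{T}N$ give $B=nI+J-A$, because off the diagonal $B_{ij}=1$ exactly when $l_i$ meets $l_j$, i.e. when $A_{ij}=0$. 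I would also note at the outset that $v=n(n+1)+3$ is odd, so the number $m_o$ of odd-length cycles is odd; hence $m-1\equiv m_e\pmod 2$, where $m_e$ is the number of even-length cycles. This parity identity is what ultimately ties $m$ to the squareness condition.

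Next, for $n\ge 3$ I would compute $\det B$. Because $A$ is block diagonal and $\mathbf{1}_{c_i}$ is the eigenvector of $C_{c_i}$ of eigenvalue $2$, every eigenvalue of $nI-A$ has the form $n-2\cos(2\pi j/c_i)\ge n-2>0$, so $nI-A$ is invertible with $(nI-A)^{-1}\mathbf{1}=\tfrac{1}{n-2}\mathbf{1}$. Applying the matrix determinant lemma to the rank-one term $J=\mathbf{1}\mathbf{1}^{T}$ gives
$$\det B=\det(nI-A)\Bigl(1+\tfrac{v}{n-2}\Bigr)=\frac{(n+1)^2}{n-2}\prod_{i=1}^{m}U_{c_i},\qquad U_c:=\det(nI-C_c).$$

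Then I would factor $U_c$. Setting $s+s^{-1}=n$ and using the cycle eigenvalues yields the closed form $U_c=s^{c}+s^{-c}-2=(s^{c/2}-s^{-c/2})^2$. For even $c$ this equals $(n^2-4)W_{c/2}^{2}=(n-2)(n+2)W_{c/2}^{2}$, and for odd $c$ it equals $(n-2)R_c^{2}$, where $W$ and $R$ are the integer-valued Chebyshev-type sequences in $n$. Substituting and collecting, the non-square part of the product is isolated as
$$\det B=(n+1)^{2}(n-2)^{m-1}(n+2)^{m_e}\,T^{2},$$
for a nonzero integer $T$. Since $N$ is square, $\det B=(\det N)^{2}$ is a perfect square, and the prefactor $(n+1)^{2}T^{2}$ is a nonzero perfect square for $n\ge 3$; hence $(n-2)^{m-1}(n+2)^{m_e}$ must itself be a perfect square. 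Here the parity identity finishes the argument: if $m$ were even then $m-1$ and $m_e$ would both be odd, so $(n-2)^{m-1}(n+2)^{m_e}$ would be $(n^{2}-4)$ times a perfect square, forcing $n^{2}-4$ to be a perfect square, which is impossible for $n\ge 3$ because $(n-1)^{2}<n^{2}-4<n^{2}$. Therefore $m$ is odd whenever $n\ge 3$.

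Finally I would treat $n=2$ separately, which is precisely where configuration $9.1$ lives: here $n-2=0$, the rank-one lemma degenerates, and in fact $\det B=0$ as soon as $m\ge 2$, so the squareness test is vacuous. For this case I would invoke the classification of $(9_3)$ configurations (there are exactly three, cf. \cite{betten2000counting}) and inspect their non-intersection cycle decompositions directly, checking that the only one with an even number of cycles is $9.1$, whose decomposition is a $C_3$ (the three mutually parallel lines) together with a $C_6$, giving $m=2$. I expect the two genuinely delicate points to be the factorization of $U_c$ as $(n-2)$ times a perfect square with the correct even/odd bookkeeping, and the degenerate $n=2$ analysis, where the determinant carries no information and one must fall back on the finite classification.
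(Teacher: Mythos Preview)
Your proposal is correct and follows essentially the same route as the paper: write $B=N^{T}N$ as a block-diagonal circulant part plus the all-ones matrix, apply the matrix determinant lemma, factor each cycle block's determinant via its eigenvalues $n-2\cos(2\pi j/c)$, and then use the parity of $m$ together with the oddness of $v=n^{2}+n+3$ to force $(n-2)(n+2)$ to be a square when $m$ is even, reducing to the $(9_3)$ classification. Your Chebyshev closed form $U_c=s^{c}+s^{-c}-2$ and your bound $(n-1)^{2}<n^{2}-4<n^{2}$ are mild cosmetic variants of the paper's direct eigenvalue pairing and its ``both $n-2$ and $n+2$ must be squares'' step, and your upfront separation of $n=2$ (where $nI-A$ is singular) is slightly more careful than the paper, but the overall strategy is identical.
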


\begin{proof}
So then the incidence matrix $N_i$ for the $i^{th}$ cycle of lines can be expressed such that $N_iN_i^T=B_i$ is a circulant matrix with first row $n+1, 0,1,\ldots, 1, 0$.

From this observation, we note that the matrix $B$ for $k=3$ must be of the form:

$$B=1_{n^2+n+3} + \begin{bmatrix}
A_1 & 0 &\ldots & 0 &0\\
0 & A_2 & &  & \\
\vdots &&\ddots&&\vdots\\
0&&&A_{m-1}&0\\
0&&\ldots &0&A_m\\

\end{bmatrix}$$

Where each $A_i$ is a circulant matrix with with first row $n,-1,0,\ldots,0,-1$ and $1_t$ is notation for a square matrix of dimension $t\times t$ where every element is $1$. 

Let $$A= \begin{bmatrix}
A_1 & 0 &\ldots & 0 &0\\
0 & A_2 & &  & \\
\vdots &&\ddots&&\vdots\\
0&&&A_{m-1}&0\\
0&&\ldots &0&A_m\\

\end{bmatrix}$$

We then apply the matrix determinant lemma \cite{brookes2005matrix} to yield:

$$|B|=|A|(1+e^TA^{-1}e)$$

Where $e$ is a column vector of $n^2+n+3$ ones.

Hence, it remains to compute $|A|$ and $(1+e^TA^{-1}e)$.

Firstly, $|A|=|A_1|\dots|A_m|$ and 

$$A^{-1}= \begin{bmatrix}
A_1^{-1} & 0 &\ldots & 0 &0\\
0 & A_2^{-1} & &  & \\
\vdots &&\ddots&&\vdots\\
0&&&A_{m-1}^{-1}&0\\
0&&\ldots &0&A_m^{-1}\\

\end{bmatrix}$$

So now, if some $A_x$ has dimension $t_x\times t_x,$ we make the observation that $e^TA_xe$ is the sum of all elements of $A_x$ (also known as the grand sum). Therefore, $e^TA_xe=t_x(n-2)$.

We now note that; $(1_{t_x}A_x)_{ij}$ sums the $j^{th}$ column of $A_x$ which is equal to $n-2$. Similarly, $(A_x1_{t_x})_{ij}$ sums the $i^{th}$ row which is also equal to $n-2$. It follows that $1_{t_x}A_x=A_x1_{t_x}$ and we have:

$$e^TA_x^{-1}et_x(n-2)=e^TA_x^{-1}ee^TA_xe=e^TA_x^{-1}1_{t_x}A_xe=e^TA_x^{-1}A_x1_{t_x}e=e^T1_{t_x}e=t_x^2$$

Hence, $$e^TA_x^{-1}e=\frac{t_x}{n-2}$$

So then
$$1+e^TA^{-1}e=1+\sum_{x=1}^m\frac{t_x}{n-2}=1+\frac{n^2+n+3}{n-2}=\frac{n^2+2n+1}{n-2}=\frac{(n+1)^2}{n-2}$$

Now, the eigenvalues of a circulant matrix with first row $c_0,c_1,\ldots,c_{t-1}$ can be calculated as:

$$\sum_{t=0}^{t-1}c_t\omega^{tj}$$

Where $\omega=e^{\frac{2\pi i}{t}}$. \cite{davis1979circulant}

Hence for a given $A_x$ of size $t_x$, the eigenvalues are:

$$n\omega^{0}-\omega^{j}-\omega^{j(t_x-1)}$$

These can be simplified to:

$$n-2cos(\frac{2\pi j}{t_x})$$

Hence the determinant of $A_x$ as the product of the eigenvalues is:

$$\prod_{j=0}^{t_x-1}(n-2cos(\frac{2\pi j}{t_x})$$

For $t_x$ even this can be simplified to:

$$(n-2)(n+2)\prod_{j=1}^{\frac{t_x}{2}-1}(n-2cos(\frac{2\pi j}{t_x}))^2$$

and for $t_x$ odd this can be simplified to:

$$(n-2)\prod_{j=1}^{\frac{t_x-1}{2}}(n-2cos(\frac{2\pi j}{t_x}))^2$$

Putting these together, we yield that the full determinant is:

\begin{equation} \label{eq:2}
(n-2)^{m-1}(n+1)^2(n+2)^{\text{number of even }t_x}\left(\prod_{x\text{ such that }t_x\text{ is even}}\prod_{j=1}^{\frac{t_x}{2}-1}(n-2cos(\frac{2\pi j}{t_x}))^2\right) \left(\prod_{x\text{ such that }t_x\text{ is odd}}\prod_{j=1}^{\frac{t_x-1}{2}}(n-2cos(\frac{2\pi j}{t_x}))^2\right)
\end{equation}

We now observe that $n^2+n+3$ is odd and:

$$n^2+n+3=\sum_{x\text{ such that } t_x\text{ is odd}}t_x+\sum_{x\text{ such that } t_x\text{ is even}}t_x$$

Hence there must be an odd number of odd $t_x$.

If $m$ is even, then there is an odd number of even $t_x$. Hence, for \ref{eq:2} to be square, both $n-2$ and $n+2$ must be square. Only $n=2$ is such a natural number.  Hence the parameters are $(9_3)$. By \cite{betten2000counting}, there are only three such configurations; the Pappus configuration; the cyclic configuration where the lines are $\{s \text{ mod }9,s+1\text{ mod }9,s+3\text{ mod }9\}$ for $s=0,\ldots,8$ denoted configuration $9.2$; and configuration $9.1$ as stated above. The Pappus configuration has the TOPs property and $3$ (odd) cycles where consecutive lines are non-intersecting, configuration $9.2$ has $1$ (odd) cycle where consecutive lines are non-intersecting. It remains then that only configuration $9.1$ has an even ($2$) cycles where consecutive lines are non-intersecting.

If $m$ is odd, then $m-1$ is even. Also as $m$ (odd) is the sum of the number of odd $t_x$ (odd by above) and the number of even $t_x$. Hence the number of even $t_x$ must be even. Hence, for $m$ odd, \ref{eq:2} is always square.

\end{proof}

\bibliographystyle{unsrt}  
\bibliography{references} 

\end{document}